\documentclass[12pt]{amsart}  % draft

\usepackage{custom_preamble} % showkeys

\begin{document}

\title[Approximate homomorphisms]{Approximate homomorphisms between the {B}oolean cube and groups of prime order}

\author{\tsname}
\address{\tsaddress}
\email{\tsemail}

\maketitle

\setcounter{section}{1}

The purpose of this note is to highlight a question raised by Shachar Lovett \cite{lov::0}, and to offer some motivation for its study.  Our interest is in the existence of injections $f:(\Z/2\Z)^n\rightarrow \Z/p\Z$ (where $p$ is an odd prime) that are `approximately homomorphisms' in the sense that $f(x+y)=f(x)+f(y)$ for many $x$ and $y$.  Lovett noted that if $p > 2^n$ then the map $f:(\Z/2\Z)^n \rightarrow \Z/p\Z$ that `embeds in binary' is an injection with\footnote{The map is defined by $f:(\Z/2\Z)^n \rightarrow \Z/p\Z; (x_1,\dots,x_n) \mapsto x_1+2x_2+\dots+2^{n-1}x_n$, and the probability that $f(x+y)=f(x)+f(y)$ is the probability that we do not need to make a carry when adding $f(x)$ and $f(y)$.  Indeed, $f(x)+f(y) -f(x+y)= 2x_1y_1+4x_2y_2 + \dots + 2^{n}x_ny_n$.  Since $2$ is invertible in $\Z/p\Z$ we can divide and then the right hand side is between $0$ and $2^{n}-1$.  It follows that it equals $0$ if and only if $x_iy_i=0$ for all $1 \leq i \leq n$ from which the equality follows.}
\begin{equation*}
\P(f(x+y)=f(x)+f(y))=\left(\frac{3}{4}\right)^n,
\end{equation*}
where $x$ and $y$ are taken uniformly and independently from $(\Z/2\Z)^n$.  We shall show the following complementary result.
\begin{proposition}\label{prop.1}
Suppose that $f:(\Z/2\Z)^n \rightarrow \Z/p\Z$ is an injection.
Then
\begin{equation*}
\mathbb{P}(f(x+y)=f(x)+f(y))=O\left(2^{-\frac{n}{11}}\right).
\end{equation*}
\end{proposition}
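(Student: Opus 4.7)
The plan is to pass to the graph $S := \{(x, f(x)) : x \in (\Z/2\Z)^n\} \subseteq G$, where $G := (\Z/2\Z)^n \oplus \Z/p\Z$ (so $|S| = 2^n$), and to exploit the asymmetric $2$-torsion of $G$: multiplication by $2$ annihilates $(\Z/2\Z)^n$ but acts as a bijection on $\Z/p\Z$. The approximate-homomorphism hypothesis will force a large subset $S^*$ of $S$ to have small doubling, and the torsion mismatch will then force $|S^*|$ (and hence $|S|$) to be small.

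Write $\epsilon$ for the probability in the statement and set $K := \epsilon^{-2}$. First I would show that $S$ has additive energy $E(S) \geq \epsilon^2 |S|^3$ in $G$: for each $x$, letting $N_x$ count those $y$ with $f(x+y) = f(x) + f(y)$, one has $\sum_x N_x = \epsilon \cdot 4^n$, so Cauchy--Schwarz gives $\sum_x N_x^2 \geq \epsilon^2 \cdot 2^{3n}$, and any pair $(y_1, y_2)$ of good neighbours of $x$ yields an additive quadruple
\begin{equation*}
(x+y_1, f(x+y_1)) + (y_2, f(y_2)) = (x+y_2, f(x+y_2)) + (y_1, f(y_1))
\end{equation*}
in $S$ from which the triple $(x, y_1, y_2)$ can be recovered. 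The Balog--Szemer\'edi--Gowers theorem then produces $S^* \subseteq S$ with $|S^*| \geq |S|/K^a$ and $|S^* + S^*| \leq K^b |S^*|$ for absolute constants $a, b$, and Pl\"unnecke--Ruzsa gives $|S^* + S^* - S^*| \leq K^{3b} |S^*|$.

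The heart of the argument is a direct computation of $(2 \cdot S^*) - S^* := \{2s - s' : s, s' \in S^*\}$. Writing $T := \pi_1(S^*) \subseteq (\Z/2\Z)^n$ and using $2x = 0$ and $-x' = x'$ in $(\Z/2\Z)^n$, a typical element is
\begin{equation*}
(2x, 2f(x)) - (x', f(x')) = (x', 2f(x) - f(x')), \quad x, x' \in T.
\end{equation*}
For distinct $x' \in T$ the first coordinate differs, and for fixed $x'$ the map $x \mapsto 2f(x) - f(x')$ is injective on $T$ because $f$ is injective and $2$ is a unit in $\Z/p\Z$; hence $|(2 \cdot S^*) - S^*| = |T|^2 = |S^*|^2$. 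But $2 \cdot S^* \subseteq S^* + S^*$, so $|(2 \cdot S^*) - S^*| \leq |S^* + S^* - S^*| \leq K^{3b} |S^*|$, giving $|S^*| \leq K^{3b}$. Combining with $|S^*| \geq 2^n/K^a$ yields $2^n \leq K^{a+3b} = \epsilon^{-2(a+3b)}$, i.e.\ $\epsilon \leq 2^{-n/(2(a+3b))}$.

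The main obstacle is purely quantitative: one must choose versions of Balog--Szemer\'edi--Gowers and Pl\"unnecke--Ruzsa so that $2(a + 3b) \leq 11$. Standard off-the-shelf inputs comfortably allow this, so the exponent $\tfrac{1}{11}$ appears to be a round number rather than a fundamental barrier; the conceptual content is the torsion-mismatch trick that forces $(2 \cdot S^*) - S^*$ to have \emph{quadratic} rather than linear size in $|S^*|$.
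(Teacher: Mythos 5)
Your overall plan is the same as the paper's: pass to the graph, use Balog--Szemer\'edi--Gowers plus Pl\"unnecke to find a piece with controlled iterated sumsets, and then exploit the torsion mismatch (dilation by $2$ kills the $(\Z/2\Z)^n$-coordinate but is a bijection on $\Z/p\Z$) to force that piece to have size as small as the sumset ratio. Your clean computation $|(2\cdot S^*)-S^*|=|S^*|^2$ is exactly the $r=2$ specialisation of the paper's Claim~A, so you have re-found the key idea.

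The genuine gap is quantitative, and it is in the step you wave away. You pass from $\P(f(x+y)=f(x)+f(y))=\epsilon$ to additive energy $E(S)\geq\epsilon^2|S|^3$ via Cauchy--Schwarz, set $K:=\epsilon^{-2}$, and then invoke a ``standard off-the-shelf'' BSG with exponents $a,b$. Your final bound is $2^n\leq\epsilon^{-2(a+3b)}$, so to get $2^{-n/11}$ you need $a+3b\leq 5.5$. No version of BSG that I know of comes close: Schoen's sharp form gives $|S^*|\gtrsim|S|/K$ and $|S^*-S^*|\lesssim K^{4}|S^*|$, and after Pl\"unnecke--Ruzsa you pay $|2S^*-S^*|\lesssim K^{12}|S^*|$, i.e.\ $a+3b=13$ and an exponent of $1/26$, not $1/11$. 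The loss is not in the torsion trick but in the very first step: converting the hypothesis to additive energy \emph{squares} $\epsilon$ and discards structure. The hypothesis is much stronger than large energy --- it says an $\epsilon$-fraction of sums $x+y$ with $x,y\in\Gamma$ land \emph{back in $\Gamma$}, so the relevant partial sumset $\Gamma+_{\mathcal G}\Gamma$ has size at most $|\Gamma|$. The paper uses precisely the bipartite-graph form of BSG (Tao--Vu, Theorem~2.29) which takes the density $\epsilon$ and the bound $|\Gamma+_{\mathcal G}\Gamma|\leq|\Gamma|$ as input and outputs $A,B\subset\Gamma$ with $|A|,|B|=\Omega(\epsilon|\Gamma|)$ and $|A+B|=O(\epsilon^{-5}|A|)$; feeding this into Pl\"unnecke and the (two-set) version of your torsion computation gives $\epsilon^{-10}\gtrsim|B|\gtrsim\epsilon|\Gamma|$, hence $\epsilon=O(2^{-n/11})$. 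So: keep your torsion-mismatch computation, but replace ``energy $\to$ BSG'' with ``partial-sumset $\to$ bipartite BSG'' and the exponent follows.
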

The method we use does generalise to other Abelian groups (see Theorem \ref{thm.d} below), but the next proposition captures perhaps the most interesting consequence.  If $G$ is a finite Abelian group then the structure theorem tells us there is a unique sequence of natural numbers $1<d_1\divides d_2 \divides \cdots  \divides d_n$ such that $G \cong (\Z/d_1\Z)\oplus \cdots \oplus (\Z/d_n\Z)$, and we say that $G$ has $n$ invariant factors.
\begin{proposition}\label{prop.2}
Suppose that $G$ and $H$ are finite Abelian groups of co-prime order; $G$ has $n$ invariant factors; and $f:G \rightarrow H$ is an injection.  Then
\begin{equation*}
\mathbb{P}(f(x+y)=f(x)+f(y))=O\left(17^{-\frac{n}{79}}\right).
\end{equation*}
\end{proposition}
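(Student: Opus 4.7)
The plan is to reduce Proposition~\ref{prop.2} to Theorem~\ref{thm.d} by localising the approximate-homomorphism condition onto a well-chosen subgroup of $G$ of the form $(\Z/d\Z)^n$. By the structure theorem, $G \cong \bigoplus_{i=1}^n \Z/d_i\Z$ with $2 \le d_1 \mid d_2 \mid \cdots \mid d_n$, and the $d_1$-torsion subgroup $G' := \{x \in G : d_1 x = 0\}$ is isomorphic to $(\Z/d_1\Z)^n$. Since $d_1 \mid |G|$ while $\gcd(|G|, |H|) = 1$, the coprimality $\gcd(d_1, |H|) = 1$ is inherited, so $G'$ sits inside the hypothesis class of Theorem~\ref{thm.d}.

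Writing $\epsilon := \mathbb{P}(f(x+y) = f(x)+f(y))$ for $x, y$ uniform in $G$, partitioning $(x,y)$ according to which cosets of $G'$ contain $x$ and $y$, and pigeonholing produces representatives $a, b \in G$ such that
\begin{equation*}
\mathbb{P}_{u, v \in G'}\bigl(f(a+b+u+v) = f(a+u) + f(b+v)\bigr) \geq \epsilon.
\end{equation*}
The three maps $g_1(u) := f(a+u)$, $g_2(v) := f(b+v)$ and $g_3(w) := f(a+b+w)$ are then injections $G' \to H$ satisfying a three-function approximate homomorphism $g_3(u+v) = g_1(u) + g_2(v)$ on a subset of $G' \times G'$ of density at least $\epsilon$.

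Next I would collapse this three-function equation to an ordinary one for a single injection $\tilde g : G' \to H$. A Cauchy--Schwarz step over pairs $(u_1, v_1), (u_2, v_2)$ with $u_1 + v_1 = u_2 + v_2$ eliminates $g_3$ and produces the equation $g_1(u_2 + h) - g_1(u_2) = g_2(v_1 + h) - g_2(v_1)$ (where $h := u_1 - u_2$) on a set of density $\gtrsim \epsilon^2$; a further pigeonhole over $h$ combined with normalisation by values at $0$ then yields an injection $\tilde g : G' \to H$ with $\tilde g(u+v) = \tilde g(u) + \tilde g(v)$ on a set of density $\gtrsim \epsilon^k$ for some small explicit $k$. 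Applying Theorem~\ref{thm.d} to $\tilde g$ bounds this density by $O(C^{-n})$ for some $C > 1$, so $\epsilon = O(C^{-n/k})$; tracking the constants carefully yields the claimed bound $O(17^{-n/79})$.

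The main obstacle is the quantitative bookkeeping in the collapse from three functions to one: $\tilde g$ must remain injective for Theorem~\ref{thm.d} to bite (the only leverage here is coprimality with $|H|$, which requires injectivity to rule out the trivial homomorphism), and the explicit exponent $1/79$ and base $17$ emerge from combining the cost $k$ of this reduction with the bound supplied by Theorem~\ref{thm.d} applied to $(\Z/d_1\Z)^n$.
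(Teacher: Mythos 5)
The proposal misses that Theorem~\ref{thm.d} already applies to \emph{any} finite Abelian group $G$, not just to groups of the form $(\Z/d\Z)^n$. The paper's proof is a one-step application: pick a prime $r$ dividing $d_1$ (hence dividing every invariant factor), so that $|r\cdot G| = \prod_i \tfrac{d_i}{r} = |G|/r^n$, and use coprimality of $|G|$ and $|H|$ to conclude $K_{H,r}=\{0\}$. Then Theorem~\ref{thm.d} immediately gives $\P(\cdots)=O(r^{-n})^\alpha = O(2^{-c(r)n})$ with $c(r)=\max\left\{\tfrac{\log_2 r}{5r+1},\tfrac{\log_2 r}{18\lfloor\log_2 r\rfloor+7}\right\}$, and a short calculation shows $c(r)\geq c(17)=\tfrac{\log_2 17}{79}$ for every prime $r$. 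Nothing about restricting to a torsion subgroup or collapsing a three-function relation is needed.

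Beyond being a detour, your reduction has a genuine gap that you yourself flag but do not close: after the Cauchy--Schwarz steps and the pigeonhole over $h$, there is no reason the resulting single function $\tilde g:G'\to H$ should be injective. Injectivity of the input function is essential for Theorem~\ref{thm.d} (otherwise the trivial zero map defeats the bound), and none of the passages $f\rightsquigarrow (g_1,g_2,g_3)\rightsquigarrow \tilde g$ you sketch preserves it. Moreover, even granting injectivity, the density loss incurred in the three-to-one collapse (at least $\epsilon^2$ from the Cauchy--Schwarz step plus further losses from pigeonholing over $h$) would multiply the final exponent well past what is needed for $17^{-n/79}$; there is no slack in the paper's constants for an extra BSG-type step, and indeed the only BSG-type argument the paper uses is already inside the proof of Proposition~\ref{prop.k}, not on top of it. I recommend re-reading the statement of Theorem~\ref{thm.d} and noticing that $G$ is arbitrary finite Abelian: the whole point of formulating it with the quantities $|r\cdot G|$ and $|K_{H,r}|$ is that it specializes cleanly to this coprime setting without any further reduction.
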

For comparison, $2^{-\frac{1}{11}} = 0.939\cdots < 17^{-\frac{1}{79}}=0.964\cdots < 2^{-\frac{1}{20}}$.

We have presented Proposition \ref{prop.2} because it makes essential use of a result of Bukh \cite[Theorem 3]{buk::0} (formulated for quantitative reasons in the relevant special case in Lemma \ref{lem.buk} below), without which the constant $17^{-\frac{1}{79}}$ would not be absolute\footnote{As a concrete example, without Bukh's result our arguments would give no information when $G=(\Z/p_n\Z)^n$ where $p_n$ is the $n$th prime.}.  Indeed, Bukh's work raises a number of questions \emph{e.g.} \cite[Question 16]{buk::0} around sumset inequalities, that bear on the problems of this paper.

We now state our main result for which we require a little more notation.  Given an Abelian group $G$ and $r \in \Z$ we write $K_{G,r}$ for the kernel of the homomorphism $G \rightarrow G; x \mapsto rx$, and given $A \subset G$ write $r\cdot A:=\{ra: a \in A\}$.

Our main result is the following.
\begin{theorem}\label{thm.d}
Suppose that $G$ and $H$ are Abelian groups, $G$ is finite, $r \in \N$ is a parameter, and $f:G \rightarrow H$ is an injection.  Then
\begin{equation*}
\P(f(x+y)=f(x)+f(y)) = O\left(\frac{1}{|G|}\min\left\{|r\cdot G||K_{H,r}|, |r\cdot H||K_{G,r}|\right\}\right)^\alpha
\end{equation*}
for any $\alpha\leq \max\left\{\frac{1}{5r+1},\frac{1}{18\lfloor \log_2 r\rfloor +7}\right\}$.
\end{theorem}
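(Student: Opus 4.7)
The plan is to combine a Balog--Szemer\'edi--Gowers based extraction of a genuine homomorphism with an elementary group-theoretic calculation that leverages the injectivity of $f$ and the $r$-torsion structure.

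I will first rewrite the probability additively on the graph $\Gamma := \{(x,f(x)) : x \in G\} \subseteq G \times H$: the relation $f(x+y) = f(x) + f(y)$ is equivalent to $(x,f(x)) + (y,f(y)) \in \Gamma$, so $p|G|^2$ counts additive triples in $\Gamma$. Cauchy--Schwarz then gives the additive-energy bound $E(\Gamma) \geq p^2|G|^3$, and the Balog--Szemer\'edi--Gowers theorem produces $\Gamma' \subseteq \Gamma$ with $|\Gamma'| \geq p^{O(1)}|G|$ and doubling $K := |\Gamma'+\Gamma'|/|\Gamma'| \leq p^{-O(1)}$.

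Higher-order combinations of $\Gamma'$ are then controlled in one of two ways. Pl\"unnecke--Ruzsa yields $|\Gamma'^{(r)}| \leq K^r |\Gamma'|$ for the $r$-fold sumset, driving the $\frac{1}{5r+1}$ exponent; Bukh's sums-of-dilates inequality (Lemma~\ref{lem.buk}) yields $|r \cdot \Gamma' - r \cdot \Gamma'| \leq K^{O(\log r)}|\Gamma'|$ for the $r$-fold dilate, upgrading the $r$-dependence from linear to logarithmic and producing the $\frac{1}{18\lfloor\log_2 r\rfloor+7}$ exponent. In either case a Freiman--Ruzsa/Ruzsa-covering style argument converts $\Gamma'$ into an affine copy of the graph of a genuine homomorphism: one obtains a subgroup $G_0 \leq G$, a shift $c \in G$, and a homomorphism $\phi : G_0 \to H$ with $f(x) = \phi(x-c) + f(c)$ on a set $S \subseteq c + G_0$ of size $|S| \geq p^{O(1)}|G|$.

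The injection hypothesis then forces $\phi$ to be injective on $S - c \subseteq G_0$, so $|S| \leq |\phi(G_0)|$, and the proof concludes with the elementary bound $|\phi(G_0)| \leq \min(|r\cdot G||K_{H,r}|,\,|r\cdot H||K_{G,r}|)$. From the short exact sequence
\begin{equation*}
0 \to K_{\phi(G_0),r} \to \phi(G_0) \xrightarrow{r} r\cdot \phi(G_0) \to 0
\end{equation*}
one factors $|\phi(G_0)| = |K_{\phi(G_0),r}| \cdot |r\cdot \phi(G_0)|$. The second factor equals $|\phi(r\cdot G_0)|$ and so is bounded both by $|r \cdot G_0| \leq |r\cdot G|$ and by $|r\cdot H|$; the first factor lies in $K_{H,r}$ as a subgroup of $H$, and tracing the preimage $r^{-1}(\ker \phi)/\ker \phi$ inside $G_0$ also gives $|K_{\phi(G_0),r}| \leq |K_{G_0,r}| \leq |K_{G,r}|$. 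Pairing $|K_{\phi(G_0),r}|\leq|K_{H,r}|$ with $|r\cdot \phi(G_0)|\leq|r\cdot G|$ yields $|\phi(G_0)| \leq |r \cdot G||K_{H,r}|$, and the opposite pairing yields $|\phi(G_0)| \leq |r\cdot H||K_{G,r}|$. Chaining with $p^{O(1)}|G| \leq |S| \leq |\phi(G_0)|$ delivers the theorem.

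The main obstacle is the extraction step with sharp constants: obtaining the precise exponents $5r+1$ and $18\lfloor\log_2 r\rfloor+7$ requires careful bookkeeping through Balog--Szemer\'edi--Gowers, Pl\"unnecke--Ruzsa or Bukh, and the Ruzsa-covering/Freiman-modelling argument that turns the resulting small-doubling graph into a genuine affine homomorphism; the use of Bukh's lemma here is exactly what promotes the $r$-dependence from linear to logarithmic.
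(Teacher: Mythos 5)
Your high-level framing matches the paper's: pass to the graph $\Gamma\subset G\times H$, apply Balog--Szemer\'edi--Gowers to find a large piece with small doubling, control iterated sums via Pl\"unnecke--Ruzsa or Bukh's dilate inequality, and then exploit injectivity together with the $r$-torsion structure. Your endgame algebra is also sound: $|\phi(G_0)|=|K_{\phi(G_0),r}|\,|r\cdot\phi(G_0)|$, $|r\cdot\phi(G_0)|\leq\min\{|r\cdot G|,|r\cdot H|\}$, $|K_{\phi(G_0),r}|\leq|K_{H,r}|$, and (for finite $G_0$) $|K_{G_0/\ker\phi,r}|=|G_0|/|r\cdot G_0+\ker\phi|\leq|K_{G_0,r}|\leq|K_{G,r}|$. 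The genuine gap is the middle step: you assert that a ``Freiman--Ruzsa/Ruzsa-covering style argument'' converts the small-doubling set $\Gamma'$ into a large subset of an affine subgroup $\{(x,\phi(x-c)+f(c)):x\in c+G_0\}$ with $\phi$ a genuine homomorphism, losing only a polynomial in $\epsilon$. No such structure theorem exists in general Abelian groups: Freiman--Green--Ruzsa produces coset progressions rather than subgroup cosets, its dependence on the doubling constant is far from polynomial, and even in the bounded-exponent setting where polynomial Freiman--Ruzsa is now known, upgrading the output to ``$f$ agrees with a homomorphism on a polynomially large set'' is itself a nontrivial additional step. Since the groups here have arbitrary torsion, the extraction you describe cannot deliver the advertised exponents $\frac{1}{5r+1}$ and $\frac{1}{18\lfloor\log_2 r\rfloor+7}$.

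The paper flags exactly this issue and then avoids it entirely. Proposition~\ref{prop.k} replaces all Freiman-type structure theory with the single counting inequality of Claim~\ref{cl.s}: for any $X,B\subset\Gamma$ one has $|X+r\cdot B|/|X|\geq|B|/(|K_{H,r}||r\cdot G|)$, proved by injecting the energy quadruples of $X$ and $r\cdot B$ into $X\times(r\cdot B)\times(r\cdot G)$ using only that $\pi_G$ and $\pi_H$ are injective on $\Gamma$. Playing this \emph{lower} bound on $|X+r\cdot B|$ against the Pl\"unnecke or Bukh \emph{upper} bound coming from BSG gives the theorem directly, with the explicit exponents falling out of the bookkeeping. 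To repair your proposal you would either need a polynomial Freiman-type theorem valid without torsion restrictions (not available), or, better, replace the structural extraction with a direct counting argument of this kind.
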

Although the statement may appear rather gruesome, Proposition \ref{prop.1} follows immediately on taking $r=2$ since $2\cdot (\Z/2\Z)^n=\{0_{(\Z/2\Z)^n}\}$ and $K_{\Z/p\Z,2}=\{0_{\Z/p\Z}\}$ for $p$ odd.
\begin{proof}[Proof of Proposition \ref{prop.2}]
Suppose that $d_1\divides \cdots \divides d_n$ are the invariant factors of $G$.  Let $r \divides d_1$ be a prime.  Since $G$ and $H$ have co-prime order we see that $K_{H,r}$ is trivial and $|r\cdot G| = \frac{d_1}{r}\cdots \frac{d_n}{r}$.  Applying Theorem \ref{thm.d} we see that
\begin{equation*}
\mathbb{P}(f(x+y)=f(x)+f(y))=O\left(2^{-c(r)n}\right) \text{ where }c(r)= \max\left\{\frac{\log_2 r}{5r+1},\frac{\log_2 r}{18\lfloor \log_2 r\rfloor +7}\right\}.
\end{equation*}
If $p \geq 2^7$ then
\begin{equation*}
c(p) \geq \frac{\log_2 p}{18 \log_2 p +7} =\frac{1}{18+\frac{7}{\log_2p}} \geq \frac{1}{19}>\frac{\log_217}{79}= c(17).
\end{equation*}
It follows that $c(r) \geq \min\{c(p): p \leq 2^7 \text{ is prime}\}$.  A short calculation shows that this minimum is $c(17)$ and the result is proved.
\end{proof}
Theorem \ref{thm.d} can be applied (again with $r=2$) to show that $\P(f(x+y)=f(x)+f(y))=O(p^{-\frac{1}{11}})$ for injections $f:\Z/p\Z \rightarrow (\Z/2\Z)^n$ (an example where Proposition \ref{prop.2} gives nothing); and $\P(f(x+y)=f(x)+f(y)) = O\left(2^{-\frac{n}{11}}\right)$ for injections $f:(\Z/2\Z)^{2n}\rightarrow (\Z/4\Z)^n$ (an example where Proposition \ref{prop.2} does not apply).

The centred unwrapping map from $f:\Z/p\Z \rightarrow \Z$ taking $x+p\Z$ to $x$ whenever $x \in \left(-\frac{p}{2},\frac{p}{2}\right]$ is injective and (supposing that $p=2k+1$ is odd) has
\begin{align*}
\P(f(x+y)=f(x)+f(y)) & \geq \frac{1}{p^2}\sum_{-\frac{p}{2}< x,y \leq \frac{p}{2}}{1_{\left(-\frac{p}{2},\frac{p}{2}\right]}(x+y)}\\
& =\frac{1}{p^2}\sum_{x=-k}^k{\sum_{y=\max\{-k,-k-x\}}^{\min\{k,k-x\}}{1}} = \frac{3k^2+3k+1}{4k^2+4k+1} > \frac{3}{4}.
\end{align*}
If $q>p$ is another prime then this can be composed with the natural projection $\Z\rightarrow \Z/q\Z$ to give an injection $f:\Z/p\Z\rightarrow \Z/q\Z$ such that $\P(f(x+y)=f(x)+f(y)) > \frac{3}{4}$.

As a final remark we mention `approximate homomorphisms' between possibly non-Abelian groups have been studied by Moore and Russell.  As an example it follows from \cite[Theorem 3]{moorus::0} and a classical result of Frobenius\footnote{Specifically \cite[Theorem 3.5.1]{davsarval::} that for $q \geq 5$, $\PSL_2(q)$ has no non-trivial representation of dimension below $\frac{1}{2}(q-1)$.} that $\P(f(xy)=f(x)+f(y))=O(|\PSL_2(q)|^{-\frac{1}{6}})$ for injections $f:\PSL_2(q) \rightarrow H$ where $q \geq 5$ and $H$ is Abelian.

In our arguments we regard our groups as endowed with counting measure, so that if $G$ is an Abelian group we define
\begin{equation*}
f\ast g(x):=\sum_y{f(y)g(x-y)} \text{ for all }x \in G \text{ and }f,g \in \ell_1(G);
\end{equation*}
and similarly
\begin{equation*}
\langle f,g\rangle_{\ell_2(G)}:=\sum_y{f(y)\overline{g(y)}} \text{ for all }f,g\in \ell_2(G).
\end{equation*}
In particular, if $G$ and $H$ are Abelian groups with $G$ finite, $f:G \rightarrow H$ is a function, and $\Gamma:=\{(x,f(x)):x \in G\}$ is the graph of $f$, then $|\Gamma|=|G|$ and
\begin{align}\label{eqn.j}
\frac{1}{|\Gamma|^2}\langle 1_\Gamma \ast 1_\Gamma,1_\Gamma\rangle_{\ell_2(G\times H)} &= \frac{1}{|G|^2}|\{a,b \in G: f(a+b)=f(a)+f(b)\}|\\\nonumber  &=\P(f(x+y)=f(x)+f(y)).
\end{align}

We now turn to the proof of Theorem \ref{thm.d} which is primarily done through Proposition \ref{prop.k}.  The overall structure of the argument is a common one in additive combinatorics.  We start with the closed graph theorem for groups: that is, the observation that a function between groups is a homomorphism if and only if its graph is a subgroup of the direct product.

The probability that we are interested in (\ref{eqn.j}) measures how close the graph of the function is to being a group.  There is a well-developed theory, starting with work of Balog and Szemer{\'e}di (see \cite[\S2.5]{taovu::}), describing what such sets must look like and it turns out that in Abelian groups of bounded exponent they must be close to genuine subgroups.  This is not quite the situation we are in, but arguments of this type can be used to show that if the probability in (\ref{eqn.j}) is large then the function must agree with a genuine homomorphism on a large set.  This leads to a contradiction.  

For our arguments we do not need much of this general theory as a counting argument lets us arrive at a contradiction directly.  This is essentially the content of Claim \ref{cl.s} below.
\begin{proposition}\label{prop.k}
Suppose that $G$ and $H$ are Abelian groups; $G$ is finite; $\Gamma \subset G \times H$ is such that the coordinate projections restricted to $\Gamma$ are injective; and $r \in \N$ is a parameter.  Then
\begin{equation}\label{eqn.bg}
\frac{1}{|\Gamma|^2}\langle 1_\Gamma \ast 1_\Gamma,1_\Gamma\rangle_{\ell_2(G\times H)} = O\left(\frac{|r\cdot G||K_{H,r}|}{|\Gamma|}\right)^{\alpha}
\end{equation}
for any $\alpha\leq \max\left\{\frac{1}{5r+1},\frac{1}{18\lfloor \log_2 r\rfloor +7}\right\}$.
\end{proposition}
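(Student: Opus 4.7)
The plan is to derive from the hypothesis that $t:=\tfrac{1}{|\Gamma|^2}\langle 1_\Gamma\ast 1_\Gamma,1_\Gamma\rangle_{\ell_2(G\times H)}$ is large a contradiction against the upper bound $|r\cdot G||K_{H,r}|$ that naturally caps the size of a ``homomorphism-like'' subset of $G\times H$ (in the sense that if $f:G\rightarrow H$ were an injective homomorphism then $|G|\leq|r\cdot G||K_{H,r}|$ automatically, since $f$ embeds $K_{G,r}$ into $K_{H,r}$). Write $n = |\Gamma|$; throughout I may assume $t$ is larger than an absolute constant, else the conclusion is trivial. The target, rearranged, reads $nt^{1/\alpha}\lesssim|r\cdot G||K_{H,r}|$ with the two candidate exponents $1/\alpha\in\{5r+1,\,18\lfloor\log_2 r\rfloor+7\}$.

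First I would use the energy hypothesis to extract additive structure. From $T = tn^2$, Cauchy--Schwarz gives the additive-energy lower bound $E(\Gamma)\geq T^2/n = t^2n^3$, and Balog--Szemer\'edi--Gowers then supplies a subset $\Gamma'\subset\Gamma$ with $|\Gamma'|\geq t^{O(1)}n$ and small doubling $|\Gamma'+\Gamma'|\leq K|\Gamma'|$ for some $K = t^{-O(1)}$. The introduction promises a more efficient direct counting argument (Claim~\ref{cl.s}), so I expect the paper to replace this extraction step by a tighter combinatorial procedure that avoids the polynomial losses of BSG while arriving at essentially the same conclusion; in either case the output is an additively structured piece of $\Gamma$ of controlled doubling.

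Second I would bound $|\Gamma'|$ via the $r$-fold dilate, and the two exponents in the statement reflect two different ways of doing this. Pl\"unnecke--Ruzsa, via $r\cdot\Gamma'\subset r\Gamma'$, gives $|r\cdot\Gamma'|\leq K^r|\Gamma'|$ and drives the exponent $\frac{1}{5r+1}$. Alternatively, Lemma~\ref{lem.buk} (the special case of Bukh's theorem relevant here, which exploits $2\cdot A\subset A+A$ and a binary decomposition of $r$) gives $|r\cdot\Gamma'|\leq K^{O(\log_2 r)}|\Gamma'|$ and drives the exponent $\frac{1}{18\lfloor\log_2 r\rfloor+7}$; the $\max$ in the statement selects whichever is stronger. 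These are combined with the graph hypothesis: because $f$ is injective, the fibres of $\sigma:(x,y)\mapsto (rx,ry)$ on $\Gamma$ have size at most $|K_{H,r}|$, since the distinct values of $f(x)$ in any fibre sit inside a single $K_{H,r}$-coset. Thus $|\Gamma'|\leq|r\cdot\Gamma'|\cdot|K_{H,r}|$, and chaining this with the dilation bound and with $r\cdot\Gamma'_G\subset r\cdot G$ should yield the target inequality.

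I expect the main obstacle to be in this final chaining. The naive chain $|\Gamma'|\leq|K_{H,r}||r\cdot\Gamma'|\leq|K_{H,r}|K^{\tau(r)}|\Gamma'|$ is tautological, so one needs finer information about how $r\cdot\Gamma'$ sits inside $(r\cdot G)\times rH$ in order to extract the $|r\cdot G|$ factor (and not the useless $|rH|$ or $|G|$). The delicate point is that the natural quotient $(x,y)\mapsto(rx,y+rH)$ from $\Gamma$ into $(r\cdot G)\times(H/rH)$ is not injective---its fibres can be as large as $|K_{G,r}|$---and breaking this obstruction is presumably where Claim~\ref{cl.s} does its decisive work, showing that the $r$-dilate of an additively structured graph cannot be too spread out in its $G$-direction. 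Calibrating the extraction and dilation losses so that no stray factor of $|K_{G,r}|$ appears is what should produce the precise constants $5r+1$ and $18\lfloor\log_2 r\rfloor+7$.
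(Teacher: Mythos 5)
Your scaffolding—BSG to extract a structured subset, then Pl\"unnecke for the $5r+1$ exponent or Bukh's dilate lemma for the $18\lfloor\log_2 r\rfloor+7$ exponent, and finally some use of the injective projections—matches the paper's, and you correctly anticipate that the decisive piece is the one that produces $|r\cdot G|$. But the proposal never supplies that piece, and the heuristic you offer for it points in a direction that stalls: once you pass to $r\cdot\Gamma'$, as you yourself observe, both coordinate projections lose injectivity, so the chain $|\Gamma'|\leq|K_{H,r}||r\cdot\Gamma'|\leq|K_{H,r}|K^{\tau(r)}|\Gamma'|$ cancels. The way out is not to dilate the whole graph.

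Claim~\ref{cl.s} instead fixes two subsets $X,B\subset\Gamma$, dilates only $B$, and counts
$\mathcal{Q}:=\{(x,y,z,w)\in X\times(r\cdot B)\times X\times(r\cdot B):x+y=z+w\}$.
The map $(x,y,z,w)\mapsto(x,y,\pi_G(z)-\pi_G(x))$ injects $\mathcal{Q}$ into $X\times(r\cdot B)\times(r\cdot G)$: the third coordinate equals $\pi_G(y)-\pi_G(w)\in r\cdot G$, and knowing it together with $x$ recovers $\pi_G(z)$ and hence $z$ (because $z\in X\subset\Gamma$ and $\pi_G$ is injective on $\Gamma$---this is precisely why $X$ must be left undilated), and then $w=x+y-z$. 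Thus $\|1_X\ast 1_{r\cdot B}\|_{\ell_2}^2=|\mathcal{Q}|\leq|X|\,|r\cdot B|\,|r\cdot G|$, which against the Cauchy--Schwarz lower bound $\|1_X\ast 1_{r\cdot B}\|_{\ell_2}^2\geq|X|^2|r\cdot B|^2/|X+r\cdot B|$ gives
$|X+r\cdot B|/|X|\geq|r\cdot B|/|r\cdot G|\geq|B|/\bigl(|K_{H,r}|\,|r\cdot G|\bigr)$,
the last step using that $\pi_H$ is injective on $\Gamma$. Feeding in $|B|=\Omega(\epsilon|\Gamma|)$ and the Pl\"unnecke or Bukh bound on $|X+r\cdot B|/|X|$ then gives the exponents. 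Two smaller points: the paper applies BSG in its bipartite-graph form (using that $\Gamma+_{\mathcal{G}}\Gamma\subset\Gamma$), which is one $\epsilon$-factor more efficient than passing through additive energy as you propose; and the symmetrisation $x_0-T=T$ built into Lemma~\ref{lem.bsg} is there specifically so that the Bukh route can use $|B+B|=|B-B|$ without a further Pl\"unnecke loss.
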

\begin{proof}
Write $\pi_G$ and $\pi_H$ for the coordinate projections on $G \times H$.
\begin{claims}\label{cl.s}
Suppose that $X,B \subset \Gamma$.  Then
\begin{equation*}
\frac{|X+r\cdot B|}{|X|} \geq \frac{|B|}{|K_{H,r}||r\cdot G|}.
\end{equation*}
\end{claims}
\begin{proof}
Write $\mathcal{Q}:=\{(x,y,z,w)\in X\times (r\cdot B) \times X \times (r\cdot B) : x+y=z+w\}$, and consider the map
\begin{equation*}
\psi: \mathcal{Q} \rightarrow X \times (r\cdot B) \times (r\cdot G); (x,y,z,w) \mapsto (x,y,\pi_G(z)-\pi_G(x)).
\end{equation*}
This is well-defined: if $(x,y,z,w) \in \mathcal{Q}$ then $x+y=z+w$, and $\pi_G$ is a homomorphism so
\begin{equation*}
\pi_G(z)-\pi_G(x) = \pi_G(y)-\pi_G(w) \in \pi_G(r\cdot B) - \pi_G(r\cdot B) \subset r\cdot G.
\end{equation*}
Moreover, $\psi$ is an injection: suppose that $\psi(x,y,z,w)=\psi(x',y',z',w')$.  Then $x=x'$, $y=y'$, and $\pi_G(z)-\pi_G(x)=\pi_G(z')-\pi_G(x')$.  It follows that $\pi_G(z)=\pi_G(z')$ and hence $z=z'$ since $\pi_G$ restricted to $\Gamma$ is injective.  But then $w'=x'+y'-z' = x+y-z = w$ and the injectivity of $\psi$ follows.

We conclude
\begin{equation}\label{eqn.y}
\|1_{X} \ast 1_{r\cdot B}\|_{\ell_2(G\times H)}^2 = |\mathcal{Q}|  \leq |X||r\cdot B||r\cdot G|.
\end{equation}
On the other hand, if $|X+r\cdot B|\leq L|X|$ then by the Cauchy-Schwarz inequality we have
\begin{equation*}
\|1_{X} \ast 1_{r\cdot B}\|_{\ell_2(G\times H)}^2 \geq \frac{\|1_{X} \ast 1_{r\cdot B}\|_{\ell_1(G\times H)}^2}{|X+r\cdot B|} = \frac{|X|^2|r\cdot B|^2}{|X+r\cdot B|}\geq \frac{1}{L}|X||r\cdot B|^2.
\end{equation*}
Combining with (\ref{eqn.y}) and cancelling we see that
\begin{align*}
|r\cdot G| \geq \frac{1}{L}|r\cdot B|  \geq \frac{1}{L}|\pi_H(r\cdot B)| = \frac{1}{L}|r\cdot \pi_H( B)| \geq \frac{1}{L}\frac{|\pi_H(B)|}{|K_{H,r}|}=\frac{1}{L}\frac{|B|}{|K_{H,r}|},
\end{align*}
since $\pi_H$ restricted to $\Gamma$ is an injection. The claim follows.
\end{proof}
Write $\epsilon$ for the left hand side of (\ref{eqn.bg}). 
\begin{claims}
(\ref{eqn.bg}) holds with $\alpha \leq \frac{1}{5r+1}$.
\end{claims}
\begin{proof}
Define a bipartite graph $\mathcal{G}$ with vertex sets two copies of $\Gamma$, and $(x,y) \in E(\mathcal{G})$ if and only if $x+y \in \Gamma$.  Then $|E(\mathcal{G})| = \epsilon |\Gamma |^2$, and $\Gamma +_{\mathcal{G}} \Gamma:=\{x+y: (x,y) \in E(\mathcal{G})\} \subset \Gamma$, so $|\Gamma +_{\mathcal{G}} \Gamma| \leq |\Gamma|$. The Balog-Szemer{\'e}di-Gowers Lemma \cite[Theorem 2.29]{taovu::} can then be applied to give sets $A,B \subset \Gamma$ such that
\begin{equation*}
|A|,|B| = \Omega\left( \epsilon|\Gamma|\right) \text{ and } |A+B| = O(\epsilon^{-4}|\Gamma|) = O(\epsilon^{-5}|A|).
\end{equation*}
Apply Pl{\"u}nnecke's inequality \cite[Corollary 6.26]{taovu::} to get a non-empty $X \subset A$ such that (recalling the notation $rB:=B+\cdots + B$) $|X+rB| = O\left( \epsilon^{-5r}|X|\right)$.  But then $r\cdot B \subset rB$ and so $|X+r\cdot B| \leq |X+rB|=O\left( \epsilon^{-5r}|X|\right)$ and we get the claimed bound from Claim A and the fact that $|B|=\Omega(\epsilon |\Gamma|)$.
\end{proof}
\begin{claims}
(\ref{eqn.bg}) holds with $\alpha \leq \frac{1}{18\lfloor \log_2 r\rfloor +7}$.
\end{claims}
\begin{proof}
Apply Lemma \ref{lem.bsg} to get a set $B \subset \Gamma$ such that $x_0-B=B$ for some $x_0 \in G$ and
\begin{equation*}
|B| = \Omega\left( \epsilon|\Gamma|\right) \text{ and } |B-B| = O(\epsilon^{-5}|\Gamma|) = O(\epsilon^{-6}|B|).
\end{equation*}
Apply Pl{\"u}nnecke's inequality \cite[Corollary 6.26]{taovu::} to get a non-empty $Y \subset B$ such that $|Y-3B| = O\left( \epsilon^{-18}|Y|\right)$.  Since $2\cdot B  \subset 2B$ we get $|Y-B-2\cdot B| \leq O\left( \epsilon^{-18}|Y|\right)$.  By Lemma \ref{lem.buk} (and the fact that $|B+B|=|B+x_0-B| = |B-B|$)
\begin{equation*}
|B+r\cdot B| = O\left(\epsilon^{-18}\right)^{\lfloor\log_2 r\rfloor}|B+B|=O\left(\epsilon^{-18}\right)^{\lfloor\log_2 r\rfloor}O(\epsilon^{-6}|B|).
\end{equation*}
We get the claimed bound from Claim \ref{cl.s} with $X=B$, and the fact that $|B|=\Omega(\epsilon |\Gamma|)$.
\end{proof}
\end{proof}
\begin{proof}[Proof of Theorem \ref{thm.d}]
We can apply Proposition \ref{prop.k} to the graph of $f$ \emph{i.e.} $\Gamma=\{(x,f(x)): x \in G\}$ in $G\times H$.  ($\pi_G$ is injective on $\Gamma$ since $f$ is a function, and $\pi_H$ is injective on $\Gamma$ since $f$ is injective.). The calculation in (\ref{eqn.j}) then gives the bound in the first term in the minimum in Theorem \ref{thm.d}.  A similar argument, applying Proposition \ref{prop.k} with $\Gamma=\{(f(x),x):x \in G\}$ and $G$ and $H$ swapped gives the bound in the second term in the minimum Theorem \ref{thm.d}.
\end{proof}

\section{Sumset estimates}

We have taken some care with powers in this note which means that we have needed bespoke versions of Bukh's Theorem \cite[Theorem 3]{buk::0} and the Balog-Szemer{\'e}di-Gowers Lemma \cite[Theorem 2.29]{taovu::}.  These are proved below through minor variations on the arguments referenced.

The first lemma is proved in roughly the same way as \cite[Theorem 15]{buk::0}, though we have to take care because our group may have torsion.  In particular, even if $|A+B| \leq K|A|$ we need not have $|2\cdot A + 2\cdot B| \leq K|2\cdot A|$.  This can be seen through a minor variation of \cite[Exercise 6.5.10]{taovu::}: let $d\in \N_0$ and $G:=(\Z/4\Z)^d\times \Z$, and define
\begin{equation*}
A:=(\Z/4\Z)^d\times \{0\} \cup \{0+4\Z\}^d \times \{1,\dots,2^d\} \text{ and }B:=\{0+4\Z,1+4\Z\}^d \times \{0\}.
\end{equation*}
Then we can compute
\begin{equation*}
|A| = 4^d +2^d, |2\cdot A| = 2\cdot 2^{d}, |A+B| = 2\cdot 4^d, \text{ and } |2\cdot A + 2\cdot B| = 2^{2d}+2^d,
\end{equation*}
from which it follows that
\begin{equation*}
\frac{|A+B|}{|A|} \leq 2 \text{ and } \frac{|2\cdot A  + 2\cdot B|}{|2\cdot A|} \geq 2^{d-1}.
\end{equation*}
To navigate around this we appeal to the beautiful \cite[Proposition 2.1]{pet::} of Petridis, the central component in his (2nd) proof of Pl{\"u}nnecke's inequality.
\begin{lemma}\label{lem.buk}
Suppose that $G$ is an Abelian group; $Y\subset A \subset G$ is finite with $|Y -A- 2\cdot A| \leq K|Y|$; and $r \in \N$ is a parameter.  Then
\begin{equation*}
|X+r\cdot A| \leq K^{\lfloor \log_2 r\rfloor}|X+A|\text{ for all  finite }X \subset G.
\end{equation*}
\end{lemma}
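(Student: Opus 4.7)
The plan is to apply Petridis's Proposition 2.1 and then induct on $k := \lfloor \log_2 r\rfloor$, performing a dyadic doubling step at each stage. Let $S \subseteq A$ be non-empty and minimise $|Z - A - 2\cdot A|/|Z|$ over non-empty $Z\subseteq A$; the hypothesis gives $|S - A - 2\cdot A|/|S| \leq K$, and Petridis's proposition then furnishes the ``absorbing'' inequality
\begin{equation*}
|S + C - A - 2\cdot A| \leq K|S + C| \quad\text{for every finite } C \subset G.
\end{equation*}
Since $S\subseteq A$ we have $0 \in S - A$, and taking $C = X$ already yields the seed bound $|X - 2\cdot A| \leq K|X + A|$.

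I would then induct on $k$; the case $k = 0$ (i.e.\ $r = 1$) is trivial. For $r \geq 2$ write $s := \lfloor r/2\rfloor$, so $\lfloor \log_2 s\rfloor = k - 1$, and use $ra = sa + \lceil r/2\rceil a$ to get the containment
\begin{equation*}
r \cdot A \subset s\cdot A + \lceil r/2\rceil \cdot A \subset s\cdot A + s\cdot A + A,
\end{equation*}
the final $+A$ only needed when $r$ is odd. The heart of the step is a doubling-type inequality of the form $|X + r\cdot A| \leq K|X + s\cdot A|$; once in hand, the inductive hypothesis $|X + s\cdot A| \leq K^{k-1}|X+A|$ combines with it to give $|X + r\cdot A| \leq K^k|X + A|$.

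Proving the doubling inequality is the main obstacle, and is precisely where Petridis (rather than Pl{\"u}nnecke--Ruzsa) is indispensable. The counterexample preceding the lemma warns that the dilate $s\cdot A$ can have much worse doubling than $A$, so a direct Pl{\"u}nnecke argument applied to $s\cdot A$ is hopeless. The fix is to feed the absorbing inequality a carefully chosen $C$---morally, $X$ together with one copy of $s\cdot A$---so that the $-A$ and $-2\cdot A$ on the left cooperate with the inclusion above to trap $X + r\cdot A$ on the left-hand side, while $|S + C|$ stays comparable to $|X + s\cdot A|$ on the right. Keeping the signs right in the conclusion---which has $+r\cdot A$ and $+A$, whereas the seed bound pairs $-2\cdot A$ with $+A$---will probably also require a parallel application of the proposition to the reflected hypothesis $|(-Y) + A + 2\cdot A| \leq K|Y|$ (which outputs the dual bound $|X + 2\cdot A| \leq K|X - A|$) and then combining the two directions through the induction.
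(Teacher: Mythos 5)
Your starting point is right: minimize $|Z-A-2\cdot A|/|Z|$ and invoke Petridis's Proposition 2.1 to get the absorbing inequality $|Z-A-2\cdot A+C|\leq K|Z+C|$ for all finite $C$. But the crux of the lemma --- the single idea that makes it work in the presence of torsion --- is missing from your sketch, and your intermediate ``doubling inequality'' does not follow from the tools you name.

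The key step in the paper's argument is to feed Petridis a \emph{kernel}: taking $C=K_{G,2^{k-1}}$ (finite after reducing to $G$ finitely generated) converts the absorbing inequality into the statement
\begin{equation*}
|2^{k-1}\cdot(Z-A-2\cdot A)|\;=\;\frac{|Z-A-2\cdot A+K_{G,2^{k-1}}|}{|K_{G,2^{k-1}}|}\;\leq\;\frac{K|Z+K_{G,2^{k-1}}|}{|K_{G,2^{k-1}}|}\;=\;K|2^{k-1}\cdot Z|,
\end{equation*}
i.e.\ the hypothesis on $Z-A-2\cdot A$ \emph{passes to dilates}. This is precisely the fact the torsion counterexample preceding the lemma warns cannot be taken for granted, and it is the ingredient that your ``morally, $X$ together with one copy of $s\cdot A$'' does not supply: the absorbing inequality has $-A-2\cdot A$ hard-wired into it, and no choice of $C$ alone will turn that into control of $s\cdot A$ for general $s$. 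Once you have the dilated bound, a single application of the Ruzsa triangle inequality with middle set $2^{k-1}\cdot Z$ gives
\begin{equation*}
\left|X+\sum_{i=0}^{k}2^{i}\cdot A\right|\leq K\left|X+\sum_{i=0}^{k-1}2^{i}\cdot A\right|,
\end{equation*}
and the binary expansion $r=\sum_{i}\epsilon_{i}2^{i}$ with $r\cdot A\subseteq\sum_{i}\epsilon_{i}2^{i}\cdot A\subseteq\sum_{i=0}^{k}2^{i}\cdot A$ finishes the proof. Note also that your proposed inductive target $|X+r\cdot A|\leq K|X+\lfloor r/2\rfloor\cdot A|$ breaks for $r$ odd: the containment $r\cdot A\subseteq 2s\cdot A+A$ leaves a stray $+A$ which is not absorbed by the inductive hypothesis $|X+s\cdot A|\leq K^{k-1}|X+A|$; the paper sidesteps this by inducting on the full dyadic sumset $X+\sum_{i\leq k}2^{i}\cdot A$, where the low-order copies of $A$ are already present. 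Your flagged sign issue, incidentally, resolves itself automatically in this route because the Ruzsa triangle inequality introduces the reflection, giving $|2^{k-1}\cdot(Z-A-2\cdot A)|$ with exactly the signs the hypothesis provides.
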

\begin{proof}
Let $\emptyset \neq Z \subset Y$ be such that $\frac{|Z-A-2\cdot A|}{|Z|}$ is minimal.  By Petridis' lemma \cite[Proposition 2.1]{pet::} we see that
\begin{equation}\label{eqn.pet}
|Z-A-2\cdot A+C| \leq \frac{|Z-A-2\cdot A|}{|Z|}|Z+C| \leq K|Z+C| \text{ for all finite }C \subset G.
\end{equation}
Suppose $k \in \N$.  We may assume that $G$ is finitely generated (\emph{e.g.} by $A \cup X$), so $K_{G,2^{k-1}}$ is finite.  Then by (\ref{eqn.pet}) we have
\begin{equation}\label{eqn.l}
|2^{k-1}\cdot (Z-A-2\cdot A)| = \frac{|Z-A-2\cdot A + K_{G,2^{k-1}}|}{|K_{G,2^{k-1}}|} \leq \frac{K|Z+K_{G,2^{k-1}}|}{|K_{G,2^{k-1}}|} = K|2^{k-1}\cdot Z|.
\end{equation}
Now, apply the Ruzsa triangle inequality \cite[Lemma 2.6]{taovu::} to see that
\begin{align*}
\left|X+\sum_{i=0}^k{2^i\cdot A}\right| & \leq \frac{\left|\left(X+\sum_{i=0}^{k-2}{2^i\cdot A}\right) +2^{k-1}\cdot Z\right|\left|-2^{k-1}\cdot Z -\left(-2^{k-1}\cdot A - 2^k\cdot A\right)\right|}{|-2^{k-1}\cdot Z|}\\
& \leq \left|X+\sum_{i=0}^{k-1}{2^i\cdot A}\right| \cdot \frac{\left|2^{k-1}\cdot (Z-A-2\cdot A)\right|}{|2^{k-1}\cdot Z|} \leq K\left|X+\sum_{i=0}^{k-1}{2^i\cdot A}\right|,
\end{align*}
by (\ref{eqn.l}) and the fact that $Z \subset Y \subset A$.  It follows by induction that $\left|X+\sum_{i=0}^k{2^i\cdot A}\right| \leq K^{k}|X+A|$ for all $k \in \N_0$. 

Let $k:=\lfloor \log_2r\rfloor$ and write $r$ in binary \emph{i.e.} let $\epsilon_0,\dots,\epsilon_k \in \{0,1\}$ be such that $r=\epsilon_0+\epsilon_12+\cdots + \epsilon_k2^k$.  Then
\begin{equation*}
|X+r\cdot A| \leq \left|X+\sum_{i=0}^k{\epsilon_i 2^i\cdot A}\right| \leq \left|X+\sum_{i=0}^k{2^i\cdot A}\right| \leq K^{k}|X+A|
\end{equation*}
as claimed.
\end{proof}
The second lemma varies the vanilla Balog-Szemer{\'e}di-Gowers Lemma by adding some $x_0$ with $x_0-T=T$.  This means that for sumset purposes we can treat $T$ as symmetric and will not need to bear the cost of applying P{\"u}nnecke's inequality to pass between bounds on $|T-T|$ and $|T+T|$.
\begin{lemma}\label{lem.bsg}
Suppose that $G$ is an Abelian group and $S \subset G$ has $\langle 1_S \ast 1_S , 1_S \rangle_{\ell_2(G)} \geq \epsilon |S|^2$.  Then there is a set $T \subset S$ and some $x_0 \in G$ such that
\begin{equation*}
x_0-T=T, |T| = \Omega(\epsilon |S|) \text{ and } |T-T| =O(\epsilon^{-5}|S|).
\end{equation*}
\end{lemma}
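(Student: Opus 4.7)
My plan is to derive the lemma as a symmetric variant of the bipartite Balog-Szemer\'edi-Gowers lemma. Setting $n := |S|$, the hypothesis says the bipartite graph $\mathcal{G}$ on $S \sqcup S$ with edges $\{(a,b) : a + b \in S\}$ has at least $\epsilon n^2$ edges, and its along-graph sumset $\subseteq S$ has size at most $n$. Applying the standard bipartite BSG (Tao-Vu, Theorem 2.29) gives $A, B \subset S$ with $|A|, |B| = \Omega(\epsilon n)$ and $|A + B| = O(\epsilon^{-4} n)$; because the edge relation is symmetric in its two arguments, a routine inspection of the proof permits the choice $A = B$.

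To introduce the symmetry $x_0 - T = T$, the first thing to try is pigeonhole: pick $x_0 \in A + A$ maximising $r_A(x_0) := |A \cap (x_0 - A)|$ and set $T := A \cap (x_0 - A)$. This yields $T \subset S$, $x_0 - T = T$, and $|T - T| = |T + T - x_0| \leq |A + A| = O(\epsilon^{-4} n)$; but the pigeonhole bound $r_A(x_0) \geq |A|^2/|A + A| = \Omega(\epsilon^6 n)$ is far short of the required $\Omega(\epsilon n)$, so the symmetrisation cannot be bolted on after the fact.

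To recover the missing powers of $\epsilon$ I would integrate the symmetrisation into the BSG argument from the outset. The hypothesis directly yields an $x_0 \in S$ with $r(x_0) := |S \cap (x_0 - S)| \geq \epsilon n$, and $T_0 := S \cap (x_0 - S)$ is already symmetric ($x_0 - T_0 = T_0$) and of size at least $\epsilon n$. I would then restrict $\mathcal{G}$ to $T_0 \sqcup T_0$ and run the BSG machinery in the restricted setting, producing a subset $T \subset T_0$ (still symmetric, since the involution $t \mapsto x_0 - t$ preserves $T_0$) with $|T| = \Omega(\epsilon n)$ and $|T + T| = O(\epsilon^{-5} n)$; since $T = x_0 - T$ gives $|T - T| = |T + T|$, this supplies the advertised bound on $|T - T|$. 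The extra factor of $\epsilon^{-1}$ in the doubling exponent (compared with the $\epsilon^{-4}$ from the unconstrained bipartite BSG) reflects the density loss incurred by the restriction.

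The main obstacle will be verifying that the restricted graph $\mathcal{G}|_{T_0 \sqcup T_0}$ still has density of order $\epsilon$ (rather than merely $\epsilon^2$), so that the BSG can be run without further loss. I would approach this via a second-moment / dependent-random-choice argument showing that, for a well-chosen pivot $x_0 \in S$, a constant fraction of the additive triples $(a,b,c) \in S^3$ with $a+b=c$ have all three entries in $T_{x_0}$; averaging $\sum_{x_0 \in S} r(x_0)^2 \geq E_3(S)^2/n = \epsilon^2 n^3$ suggests that such an $x_0$ should exist.
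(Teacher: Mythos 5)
Your high-level instinct — symmetrise via a pivot $x_0$ with $U:=S\cap(x_0-S)$ invariant under $t\mapsto x_0-t$ — matches the paper, and your diagnosis that post-hoc pigeonhole after BSG loses too many powers of $\epsilon$ is correct. But the second attempt has two genuine gaps. First, the assertion that BSG applied to the restricted graph $\mathcal{G}|_{T_0\sqcup T_0}$ ``produces a subset $T\subset T_0$ (still symmetric, since the involution preserves $T_0$)'' is simply false: the involution fixing $T_0$ does not make the dependent-random-choice selections in BSG equivariant, so the output set $A\subset T_0$ has no reason to satisfy $x_0-A=A$. Second, even if you then symmetrise by hand via $T:=A\cap(x_0-A)$, you would need $|A|>\tfrac12|T_0|$ for the intersection to be nonempty, let alone of size $\Omega(\epsilon|S|)$; vanilla BSG on a graph of density $\delta$ on $T_0\sqcup T_0$ gives $|A|=\Omega(\delta|T_0|)$, so with $|T_0|=\Theta(\epsilon|S|)$ you would need $\delta=\Omega(1)$ — constant density — not $\Omega(\epsilon)$ as your last paragraph targets. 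Your suggested second moment $\sum_{x_0}r(x_0)^2\geq\epsilon^2|S|^3$ does not control the edge density of the restricted graph at all, so this obstacle is not addressed.

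The paper avoids both problems by a single integrated pass. It chooses the pivot $x$ not merely to maximise $|U|$ but (via the functional $\E[|U|^2-\tfrac12 c^{-1}|\{(y,z)\in U^2:p(y,z)\leq c\epsilon^2\}|]$ with $c=1/18$) to simultaneously guarantee $|U|\geq\frac{\epsilon}{\sqrt2}|S|$ and that all but a $2c$-fraction of pairs $(y,z)\in U^2$ have $p(y,z)>c\epsilon^2$. Averaging then gives a ``popular'' subset $R\subset U$ with $|R|\geq\tfrac23|U|$ — crucially more than half — so that $T:=R\cap(x-R)$ both has $|T|\geq\tfrac13|U|$ and satisfies $x-T=T$. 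The symmetrisation is thus performed where the relevant set already occupies more than half of a symmetric ambient set, and the small threshold constant $c$ is exactly what buys that margin. Finally, $|T-T|$ is bounded directly by counting popular paths through $U$ rather than by invoking BSG as a black box, which is how the exponent lands at $\epsilon^{-5}$ instead of worse. So the route you sketch would need a substantially sharper version of BSG (one outputting sets comprising more than half of the ambient symmetric set) to close the gap, and the paper simply proves that sharper statement directly.
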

\begin{proof}
For $y,z \in G$ define
\begin{equation*}
p(y,z):=\frac{1}{|S|}|(y+S)\cap S\cap (S+z)|1_S(y)1_S(z) \leq \frac{1}{|S|}1_S \ast 1_{-S}(y-z).
\end{equation*}
Let $x \in S$ be chosen uniformly at random and put $U:=S \cap (x-S)$ so that $\P((y,z) \in U^2) =p(y,z)$. Let $c:=\frac{1}{18}$ (for reasons that will become clear) and note, by the Cauchy-Schwarz inequality, that
\begin{equation*}
\E{\left(|U|^2 - \frac{1}{2}c^{-1}|\{(y,z)\in U^2: p(y,z) \leq c\epsilon^2 \}|\right)} \geq \E{|U|^2} - \frac{1}{2}\epsilon^2 |S|^2 \geq \frac{1}{2}\epsilon^2|S|^2.
\end{equation*}
It follows that we can pick $x \in S$ such that
\begin{equation*}
|U| \geq \frac{\epsilon}{\sqrt{2}}|S| \text{ and }|\{(y,z)\in U^2: p(y,z) > c\epsilon^2 \}| \geq (1-2c)|U|^2.
\end{equation*}
By averaging, the set $R:=\left\{y \in U: |\{z \in U: p(y,z) > c\epsilon^2 \}|  \geq (1-6c)|U|\right\}$ has $|R|\geq \frac{2}{3}|U|$.  Since $x-U=U$ we see that $x-R \subset U$ and hence $T:=R\cap (x-R)$ has $|T| \geq \frac{1}{3}|U|$ (by the pigeon-hole principle) and $T=x-T$.

For each $s \in T-T$ there are elements $y,w \in T$ such that $s=y-w$.  Since $y,w \in R$ and $1-6c \geq \frac{2}{3}$, the pigeon-hole principle tells us that there are at least $\frac{1}{3}|U|$ elements $z \in U$ such that $p(y,z)>c\epsilon^2$ and $p(w,z)>c\epsilon^2$.  Hence
\begin{align*}
\frac{|U|}{3}\cdot (c\epsilon^2)^2|S|^2 & < |S|^2\sum_{z \in U}{p(y,z)p(w,z)}\\ & \leq \sum_{z\in U}{1_S\ast1_{-S}(y-z)1_{-S}\ast 1_{S}(z-w)} \leq 1_S \ast 1_{-S} \ast 1_{-S}\ast 1_{S}(y-w).
\end{align*}
It follows that
\begin{equation*}
|T-T| \cdot \frac{|U|}{3}\cdot (c\epsilon^2)^2|S|^2 \leq \sum_{s \in G}{1_S \ast 1_{-S} \ast 1_S\ast 1_{-S}(s)} = |S|^4,
\end{equation*}
and the lemma is proved with $x_0=x$.
\end{proof}

\section*{Acknowledgements}

The author thanks Terry Tao for bringing the problem to his attention, an anonymous referee for their improvements, and Ruoyi Wang for a correction.

\bibliographystyle{halpha}

\bibliography{references}

\end{document}